\documentclass[a4paper,12pt]{amsproc}

\usepackage{amssymb}
\usepackage{graphicx}
\usepackage{float}
\usepackage{amsmath}
\usepackage{array}
\usepackage{multirow}
\usepackage{mathrsfs}
\usepackage{textcomp}
\usepackage[bottom]{footmisc}
\usepackage{xcolor}
\usepackage{cite}

\newcolumntype{M}[1]{>{\raggedright}m{#1}}

\DeclareMathAlphabet{\mathpzc}{OT1}{pzc}{m}{it}

\newtheorem{theorem}{Theorem}[section]
\newtheorem{lemma}[theorem]{Lemma}
\newtheorem{proposition}[theorem]{Proposition}

\newtheorem{conjecture}[theorem]{Conjecture}

\theoremstyle{definition}
\newtheorem{definition}[theorem]{Definition}

\theoremstyle{remark}
\newtheorem{remark}[theorem]{Remark}

\numberwithin{equation}{section}

\allowdisplaybreaks[4]

\begin{document}

\null\vskip -1cm

\title{New $\mu$-Zariski pairs of surface singularities}

\author{Christophe Eyral, Masaharu Ishikawa, Mutsuo Oka, \"Oznur Turhan}

\address{C. Eyral, Institute of Mathematics, Polish Academy of Sciences, ul. \'Sniadeckich 8, 00-656 Warsaw, Poland}  
\email{cheyral@impan.pl} 

\address{M. Ishikawa, Department of Mathematics, Hiyoshi Campus, Keio University,  4-1-1 Hiyoshi, Kohoku-ku, Yokohama, Kanagawa 223-8521, Japan}
\email{ishikawa@keio.jp}

\address{M. Oka, Emeritus Professor, Tokyo Institute of Technology, 2-12-1 Ohokayama, Meguro-ku, Tokyo 152-8551, Japan}   
\email{okamutsuo@gmail.com}

\address{\"O. Turhan, Departement of Mathematics, Galatasaray University, Ortak\"{o}y 34357, Istanbul, Turkey and Institute of Mathematics, Polish Academy of Sciences, ul. \'Snia\-deckich~8, 00-656 Warsaw, Poland} 
\email{oturhan@impan.pl}

\thanks{}

\subjclass[2020]{14B05, 14J17, 32S05, 32S25}

\keywords{Complex surface singularity, $\mu$-Zariski pair, $\mu$-constant stratum, almost Newton non-degenerate function, monodromy zeta-function, dual resolution graph.}

\begin{abstract}
To the best of the authors' knowledge, all previously known examples of $\mu$-, $\mu^*$-, link-, or ordinary Zariski pairs of surface singularities in $\mathbb{C}^3$ consist of (possibly weighted) L\^e--Yomdin singularities. In this paper, we present an example of a $\mu$-Zariski pair involving surface singularities that are not of L\^e--Yomdin type.

\end{abstract}

\maketitle

\markboth{C. Eyral, M. Ishikawa, M. Oka, \"{O}. Turhan}{New $\mu$-Zariski pairs of surface singularities}

\section{Introduction}

Let $g_0(z_1,z_2,z_3)$ and $g_1(z_1,z_2,z_3)$ be polynomials in the complex variables $z_1,z_2,z_3$. Assume that $g_0$ and $g_1$ vanish at the origin $\mathbf{0}\in\mathbb{C}^3$, and that the corresponding surface germs $V(g_0)$ and $V(g_1)$ in $\mathbb{C}^3$ have isolated singularities at $\mathbf{0}$. We say that $g_0$ and $g_1$ form a \emph{Zariski pair of surface singularities} if they have the same monodromy zeta-function (in particular, the same Milnor number) at $\mathbf{0}$, and if the surface germs $V(g_0)$ and $V(g_1)$ have the same abstract topology but distinct embedded topologies.
The first example of such a pair was found by Artal Bartolo \cite{ArtalCRAS,ArtalMem} as a counterexample to a conjecture of Yau \cite{Yau}. In this example, the singularities of the polynomial germs $g_0$ and $g_1$ are \emph{superisolated} in the sense of Luengo \cite{Luengo}, and their projective tangent cones $C_0$ and $C_1$ form a \emph{Zariski pair of projective curves} in $\mathbb{P}^2$ with distinct \emph{Alexander polynomials}.
We recall that $(C_0,C_1)$ is a Zariski pair of curves in the complex projective plane $\mathbb{P}^2$ if there exist regular neighbourhoods $N_0$ and $N_1$ of $C_0$ and $C_1$, respectively, such that the pairs $(N_0, C_0)$ and $(N_1, C_1)$ are homeomorphic, whereas the pairs $(\mathbb{P}^2, C_0)$ and $(\mathbb{P}^2, C_1)$ are not (see \cite{Artal}).

By contrast, the situation in which the Alexander polynomials of $C_0$ and $C_1$ coincide is still far from being well understood. (For explicit examples of Zariski pairs of projective curves with the same Alexander polynomial, see, for instance, \cite{ArtalCarmona,O-1,O-2,Deg,EO4,EO5}.) An initial contribution in this direction was made by the first and third named authors in \cite{EO1}, where they constructed the first example of a \emph{$\mu^*$-Zariski pair of surface singularities} whose projective tangent cones may have identical Alexander polynomials. In that construction, the singularities forming the pair are of \emph{L\^e--Yomdin} type (in particular, the result applies to the class of superisolated singularities).
We recall that a $\mu^*$-Zariski pair is defined as a pair of surface singularities that have the same abstract topology, the same monodromy zeta-function, and the same Teissier $\mu^*$-invariant, but belong to distinct path-connected components of the $\mu^*$-constant stratum.

In \cite{EIO}, the first three named authors took a further step by constructing an example of a \emph{$\mu$-Zariski pair of surface singularities}. The notion of a $\mu$-Zariski pair is defined analogously to that of a $\mu^*$-Zariski pair, with the $\mu^*$-constant stratum replaced by the $\mu$-constant stratum. The examples considered in \cite{EIO} include L\^e--Yomdin singularities and, more generally, belong to the broader class of \emph{weighted} L\^e--Yomdin singularities.

It should be emphasized that the fact that $g_0$ and $g_1$ form a $\mu$- or $\mu^*$-Zariski pair does not, by itself, imply that $V(g_0)$ and $V(g_1)$ have distinct embedded topologies. Nevertheless, by a famous theorem of Teissier \cite{Teissier2}, being such a pair is a necessary condition for such a distinction to occur and therefore constitutes a substantial step toward a deeper understanding of the problem. 

Examples of pairs of L\^e--Yomdin singularities with the same monodromy zeta-function that are not connected by a $\mu$-constant path --- but whose links are non-homeomorphic --- were previously constructed by the third named author in \cite{O3}. The same reference also provides examples of so-called \emph{Zariski pairs of links}, namely pairs of germs whose links are homeomorphic and whose monodromy zeta-functions coincide. (By definition, a Zariski pair of links does not address whether the corresponding singularities lie in the same path-connected component of the $\mu$-constant stratum.)

In all cases considered so far --- whether $\mu$-, $\mu^*$-, link-, or ordinary Zariski pairs --- the surface germs involved consist of (possibly weighted) L\^e--Yomdin singularities. In this paper, we present an example of a $\mu$-Zariski pair whose singularities are not of L\^e--Yomdin type, thereby opening the way to the study of Zariski pair phenomena beyond the L\^e--Yomdin class.

\section{Statement of the theorem}

Let $f_0(z_1,z_2,z_3)$ and $f_1(z_1,z_2,z_3)$ be two reduced, irreducible, homogeneous polynomials of degree $d$ in the complex variables $z_1,z_2,z_3$, and let $C_0\equiv V_{\mathbb{P}^2}(f_0)$ and $C_1\equiv V_{\mathbb{P}^2}(f_1)$ denote the projective curves defined by $f_0$ and $f_1$, respectively, in the complex projective plane $\mathbb{P}^2$. We assume that $C_0$ and $C_1$ form a \emph{Zariski pair}. 
By a linear change of coordinates, we may assume that the singularities of $C_0$ and $C_1$ do not lie on the coordinate subspace $z_i=0$ for any $1\leq i\leq 3$, and that their defining polynomials $f_0$ and $f_1$ are \emph{convenient} and \emph{Newton non-degenerate} on the proper faces of their (common) Newton boundary (for the definitions of \emph{convenience} and \emph{Newton non-degeneracy}, we refer the reader to \cite{K}). They are, however, Newton degenerate on the (unique) top-dimensional face. 

Now, for $j\in\{0,1\}$, we consider the germ at $\mathbf{0}\in\mathbb{C}^3$ of the polynomial $g_j$ defined by
\begin{equation}\label{mainpoly}
g_j(z_1,z_2,z_3):=z_1^2f_j(z_1,z_2,z_3)+h_j(z_2,z_3),
\end{equation}
where $h_j(z_2,z_3)$ is a convenient, homogeneous polynomial in the variables $z_2$ and $z_3$ of degree $d+3$ such that the function
\begin{equation}\label{FF}
z_1^2f_j(0,z_2,z_3)+h_j(z_2,z_3)
\end{equation} 
is Newton non-degenerate. 
We suppose further that
\begin{equation}\label{pseudoLYA}
\mbox{Sing}(V_{\mathbb{P}^2}(f_j))\cap V_{\mathbb{P}^2}(h_j)=\emptyset.
\end{equation}
Here, $\mbox{Sing}(V_{\mathbb{P}^2}(f_j))$ stands for the singular locus of $V_{\mathbb{P}^2}(f_j)$, and  $V_{\mathbb{P}^2}(h_j)$ is the zero set of $h_j$ in $\mathbb{P}^2$.

\begin{remark}
Since $\mbox{Sing}(V_{\mathbb{P}^2}(z_1^2 f_j))$ includes a $1$-dimensional component, given by the equation $z_1=0$, the surface singularity $(V(g_j),\mathbf{0})$ in $\mathbb{C}^3$ is not of L\^e--Yomdin type (in particular, it is not superisolated).
\end{remark}

\begin{figure}[t]
\centering
\includegraphics[width=.4\linewidth]{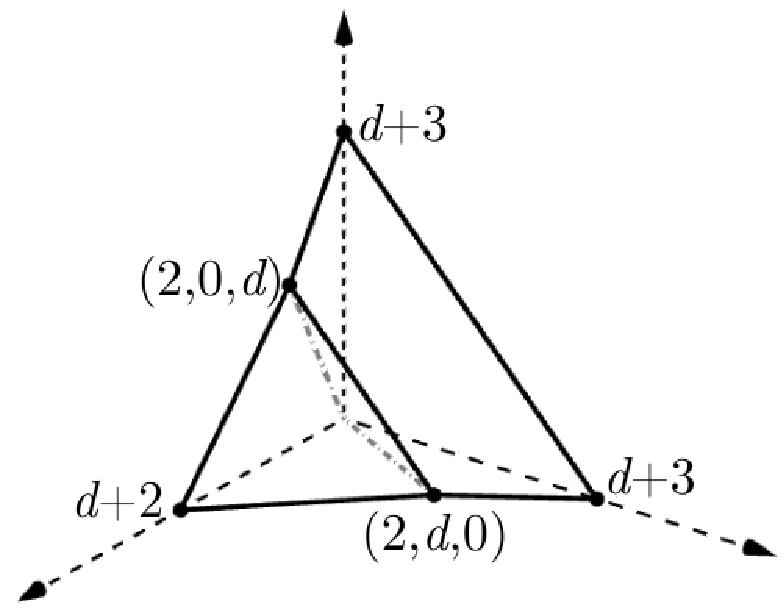}
\caption{Newton boundary $\Gamma$}
\label{Figure1}
\end{figure}

Clearly, $g_0$ and $g_1$ have the same Newton boundary, denoted by $\Gamma$, as illustrated in  Figure \ref{Figure1}.
Let $\Delta_0$ denote the $2$-dimensional face of $\Gamma$ defined by the vertices $(d+2,0,0)$, $(2,d,0)$ and $(2,0,d)$, and let $\Delta_1$ be that defined by the vertices $(2,d,0)$, $(2,0,d)$, $(0,d+3,0)$ and $(0,0,d+3)$. From our assumption it follows that $g_j$ is Newton non-degenerate on all proper faces of $\Delta_0$, while it is Newton degenerate on $\Delta_0$ itself. Besides, the above condition \eqref{FF} means that $g_j$ is Newton non-degenerate on $\Delta_1$ and its faces. Thus, $g_j$ is \emph{weakly almost Newton non-degenerate} in the sense of \cite[\S 2.7]{O3} (see also \cite[\S 3]{O2}), and since it is convenient, it has an isolated singularity at $\mathbf{0}$.

Now, consider the set $\mathcal{W}(\Gamma)$ consisting of all (germs at $\mathbf{0}$ of) polynomials $g(z_1,z_2,z_3)$ such that:
\begin{enumerate}
\item[$\bullet$]
$\Gamma(g)=\Gamma$, where $\Gamma(g)$ is the Newton boundary of $g$;
\item[$\bullet$]
$g$ is Newton non-degenerate on any face $\Delta$ of $\Gamma$ if $\Delta\not=\Delta_0$.
\end{enumerate} 
In particular, $g_0,g_1\in\mathcal{W}(\Gamma)$. Note that for any $g\in\mathcal{W}(\Gamma)$, the Newton principal part of $g$ is of the form
\begin{equation}\label{NPPofg}
z_1^2 f(z_1,z_2,z_3) + h(z_2,z_3),
\end{equation}
where $f(z_1,z_2,z_3)$ and $h(z_2,z_3)$ are convenient, homogeneous polynomials of degree $d$ and $d+3$, respectively, with $h(z_2,z_3)$ involving only the variables $z_2$ and $z_3$. Then, as above, $g$ is weakly almost Newton non-degenerate and has an isolated singularity at $\mathbf{0}$.

To ensure that $\mathcal{W}(\Gamma)$ is finite-dimensional, we additionally assume that there is an arbitrarily large fixed integer $n\geq d+3$ such that for any element $g$ in $\mathcal{W}(\Gamma)$, the degree of $g$ is less than or equal to $n$.

\begin{definition}\label{defwzpss}
We say that $g_0$ and $g_1$ form a \emph{$\mu$-Zariski pair of surface singularities in $\mathcal{W}(\Gamma)$} if the following three conditions are satisfied:
\begin{enumerate}
\item
$g_0$ and $g_1$ have the same monodromy zeta-function (in particular, the same Milnor number) at~$\mathbf{0}$;
\item
the surface germs $(V(g_0),\mathbf{0})$ and $(V(g_1),\mathbf{0})$ in $\mathbb{C}^3$, defined by $g_0$ and $g_1$ respectively, are homeomorphic;
\item
$g_0$ and $g_1$ cannot be joined by a $\mu$-constant continuous path in $\mathcal{W}(\Gamma)$ --- that is, they lie in distinct path-connected components of the $\mu$-constant strata of $\mathcal{W}(\Gamma)$.
\end{enumerate} 
\end{definition}

The next proposition is proved in section \ref{proof-pos}. 

\begin{proposition}\label{pos}
For any element $g\in\mathcal{W}(\Gamma)$, the following two assertions hold true.
\begin{enumerate}
\item
The polynomial $f$ that appears in \eqref{NPPofg} is reduced, that is, the plane projective  curve~$C\subseteq \mathbb{P}^2$ defined by $f$ has only isolated singularities.
\item
The Milnor number $\mu^{(2)}(g):=\mu(g^H)$ of the restriction $g^H$ of $g$ to a generic plane $H\subseteq \mathbb{C}^3$ passing through the origin is independent of the choice of $g$ within $\mathcal{W}(\Gamma)$. In particular, since the multiplicity at $\mathbf{0}$ of any $g\in\mathcal{W}(\Gamma)$ is constant (equal to $d+2$), the $\mu$-constant strata of $\mathcal{W}(\Gamma)$ coincide with the $\mu^*$-constant strata.
\end{enumerate}
\end{proposition}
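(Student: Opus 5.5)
The two assertions are proved independently. For (1) we use non-degeneracy on the coordinate edges of $\Delta_0$. For (2) we compute $\mu(g^H)$ with Kouchnirenko's formula, applied in well-chosen coordinates on a generic plane $H$.

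\emph{Assertion (1).} Let $g\in\mathcal{W}(\Gamma)$ with Newton principal part $z_1^2f+h$ as in \eqref{NPPofg}. The edge of $\Gamma$ joining $(d+2,0,0)$ and $(2,d,0)$ has face function $z_1^2 f(z_1,z_2,0)$. This edge is a face different from $\Delta_0$, so $g$ is non-degenerate on it. Hence the binary form $f(z_1,z_2,0)$ has no multiple factor vanishing in $(\mathbb{C}^*)^2$. Since $f$ is convenient, $f(z_1,z_2,0)$ contains $z_1^d$ and $z_2^d$, so all its roots lie in the torus. Therefore the line $\{z_3=0\}$ meets $C$ in $d$ distinct points.

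Suppose $f$ were not reduced, say $f=p^2q$ with $\deg p\ge 1$. By B\'ezout, $V_{\mathbb{P}^2}(p)$ meets $\{z_3=0\}$. At such a point $f(z_1,z_2,0)$ has a multiple root, which is a contradiction. (The same argument works with the other coordinate lines.)

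\emph{Assertion (2).} Take $H=\{z_3=\alpha z_1+\beta z_2\}$ with $(\alpha,\beta)$ generic, and use coordinates $u=z_1$, $v=z_2$ on $H$. The key observation is that the weight vectors of both $2$-dimensional faces give equal weights to $z_2$ and $z_3$: for $\Delta_0$ the weight is $(1,1,1)$ (degree $d+2$), and for $\Delta_1$ it is $(3,2,2)$ (degree $2(d+3)$). Hence the substitution $z_3=\alpha u+\beta v$ preserves both weighted degrees. Every monomial of $g$ above $\Gamma$ goes to terms of weighted degree strictly greater than $d+2$, resp. $2(d+3)$, for the induced weights $(1,1)$ and $(3,2)$ on $(u,v)$.

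It follows that the Newton polygon of $g^H$ is the polygon with vertices $(d+2,0)$, $(2,d)$, $(0,d+3)$, independently of $g$. Its face functions are as follows:
\begin{itemize}
\item The first edge has face function $u^2f(u,v,\alpha u+\beta v)$. This is non-degenerate for generic $H$, because by (1) the generic line $\{z_3=\alpha z_1+\beta z_2\}$ meets $C$ transversally in $d$ points different from the point where it meets $\{z_1=0\}$.
\item The second edge has face function $u^2 f(0,1,\beta)v^d+h(1,\beta)v^{d+3}$. This is non-degenerate as soon as $f(0,1,\beta)\,h(1,\beta)\neq0$, which holds for generic $\beta$ since $f(0,z_2,z_3)$ and $h$ are nonzero binary forms.
\end{itemize}
Hence $g^H$ is convenient and Newton non-degenerate with Newton polygon independent of $g$. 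Kouchnirenko's formula then gives $\mu(g^H)=2A-(d+2)-(d+3)+1$, where $A$ is the area under the polygon, so $\mu(g^H)$ is the same for all $g\in\mathcal{W}(\Gamma)$.

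Finally, $\mu^{(1)}=d+1$ since the multiplicity is constant and equal to $d+2$, and $\mu^{(3)}=\mu$. Therefore $\mu$-constancy in $\mathcal{W}(\Gamma)$ is equivalent to $\mu^*$-constancy, and the $\mu$-constant strata coincide with the $\mu^*$-constant strata.

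\emph{Main obstacle.} The delicate step is checking that generic plane sections are really computed by these Newton polygons, that is, that no monomial of $g$ above $\Gamma$ drops onto or below the polygon after the substitution. The equal-weights observation is what handles this, and I would verify it carefully, in particular for monomials with $z_1$-exponent $0$ or $1$.
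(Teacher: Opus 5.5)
Your proof is correct and follows the paper's route. For (1) you use the degeneracy of the face function $z_1^2f(z_1,z_2,0)$ on the edge joining $(d+2,0,0)$ and $(2,d,0)$ when $f$ has a square factor. For (2) you apply Kouchnirenko's theorem to $g^H$ with $H=\{z_3=\alpha z_1+\beta z_2\}$, whose Newton polygon has vertices $(d+2,0),(2,d),(0,d+3)$, which gives $\mu(g^H)=d^2+2d+2$.

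Your weight argument and your explicit face function on the second edge fill in details the paper leaves implicit. Two small corrections: for the weight $(3,2)$ the substitution does not decrease the weighted degree rather than preserving it, and you should also require $f(1,0,\alpha)\neq0$ so that the vertex $(d+2,0)$ is present, which again holds for generic $\alpha$.
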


We recall that the Teissier \emph{$\mu^*$-invariant} of an element $g\in\mathcal{W}(\Gamma)$ is the triple composed of the Milnor numbers $\mu(g)$ and $\mu^{(2)}(g)$ of $g$ and of its restriction to a generic plane of $\mathbb{C}^3$ passing through the origin, together with the multiplicity of $g$ at~$\mathbf{0}$ (see \cite{Teissier2}).
 
\begin{remark}
Note that, although the Milnor number $\mu^{(2)}(g)$ remains constant within $\mathcal{W}(\Gamma)$, the Milnor number $\mu(g)$ may change.
\end{remark}

The main result of this paper is stated as follows.

\begin{theorem}\label{mt}
If the singularities of the curves $C_0$ and $C_1$ are Newton non-degenerate in suitable local coordinates, then the polynomial function germs $g_0$ and $g_1$ defined by \eqref{mainpoly} form a $\mu$-Zariski pair  of surface singularities in $\mathcal{W}(\Gamma)$.
\end{theorem}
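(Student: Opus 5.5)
We verify the three conditions of Definition \ref{defwzpss} in turn, using that $g_0,g_1$ are weakly almost Newton non-degenerate with common Newton boundary $\Gamma$ and that $C_0,C_1$, being a Zariski pair, have the same combinatorics (same degree, same numbers and local topological types of singular points).

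\emph{Step 1 (zeta-functions).} We would compute the monodromy zeta-function of $g_j$ with the formula for (weakly) almost Newton non-degenerate functions of \cite[\S 2.7]{O3} (cf.\ \cite{O2}). This is a Varchenko-type product over the faces of $\Gamma$, in which the contribution of the degenerate face $\Delta_0$ is corrected by terms coming from the singular points of $C_j\cap\{z_1\neq0\}$. All the non-degenerate contributions (from $\Delta_1$ and from the coordinate faces) depend only on $\Gamma$. The correction for $\Delta_0$ depends only on the Euler characteristic of the relevant affine piece of $V(g_j)\cap$ torus and on the local zeta-functions of the singularities of $C_j$. Under the hypothesis that these singularities are Newton non-degenerate in suitable coordinates, these local zeta-functions are determined by their local Newton boundaries, which are topological invariants of the plane curve germs. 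Hence they agree for $j=0,1$, and $\zeta_{g_0}=\zeta_{g_1}$; in particular $\mu(g_0)=\mu(g_1)$.

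\emph{Step 2 (abstract topology).} We would build an explicit good resolution of $(V(g_j),\mathbf 0)$. First we take the toric modification associated with a regular simplicial subdivision of the dual Newton diagram of $\Gamma$. Over the vector $P_0$ dual to $\Delta_0$, the exceptional divisor $E(P_0)$ contains the strict transform of $C_j$, and it is singular exactly at points corresponding to $\mathrm{Sing}(C_j)$. Condition \eqref{pseudoLYA} ensures that these points are away from the other exceptional components. At each such point the surface is locally of the form (unit)$\cdot$(local equation of $C_j$) $+\,u^{m}$, with $m$ determined by the weight of $P_0$. Because the local curve singularity is Newton non-degenerate, we can resolve it by a further toric modification. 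The resulting dual resolution graph, with genera and self-intersections, depends only on $\Gamma$, the genus of $C_j$ and the local Newton data of its singular points, so it coincides for $j=0,1$. By Neumann's plumbing calculus, the links are then homeomorphic, and so, by the conic structure theorem, are the germs. I expect this bookkeeping of the resolution graph (self-intersection numbers in particular) to be the most laborious part.

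\emph{Step 3 (non-connectedness).} Suppose $g_t$, $t\in[0,1]$, is a $\mu$-constant path in $\mathcal W(\Gamma)$ with Newton principal part $z_1^2f_t+h_t$. By Proposition \ref{pos}, each $f_t$ is reduced and the path is also $\mu^*$-constant. We would show that $\mu$-constancy forces the curves $C_t=V_{\mathbb P^2}(f_t)$ to be equisingular, i.e.\ to have constant total Milnor number and constant local topological types. The idea is that the Milnor number formula of Step 1 expresses $\mu(g_t)$ as a constant term depending only on $\Gamma$, plus a term that is strictly monotone in the singularity data of $C_t$ (for instance, $\mu(g_t)=\text{const}+c\sum_{p}\mu_p(C_t)$ with $c>0$). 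We must also check that this data can only jump upward under specialization. By upper semicontinuity of the local Milnor numbers, no singularity of $C_t$ can degenerate. It may also be necessary to rule out new singular points appearing on $z_1z_2z_3=0$ or meeting $V(h_t)$; these situations would either violate the non-degeneracy conditions defining $\mathcal W(\Gamma)$ or change $\mu$. Constancy of the local Milnor numbers along a continuous family of reduced plane curve germs gives local topological triviality (L\^e--Ramanujam for curves). Hence $C_t$ is an equisingular family from $C_0$ to $C_1$, and therefore $(\mathbb P^2,C_0)\cong(\mathbb P^2,C_1)$, contradicting the assumption that $C_0,C_1$ form a Zariski pair. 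The main obstacle is this step, namely controlling $\mu$ along the path precisely enough to exclude all degenerations of $C_t$ (including singular points wandering into the coordinate lines or into $V(h_t)$). Establishing the monotone formula for $\mu(g)$ in terms of $\sum\mu_p(C)$ is what I would attempt first.
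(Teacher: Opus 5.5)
Your three-step outline matches the paper's, but Steps 1 and 3 each have a genuine gap, and both come from misplacing hypothesis \eqref{pseudoLYA}. In Step 1 you never use \eqref{pseudoLYA}, and your claim that the correction terms are determined by the local topological types of the singular points of $C_j$ is false. Apply your argument to the pair of section \ref{ex27}: it has the same curve $C$ and the same $\Gamma$, so you would conclude equal zeta-functions, whereas in fact $\mu=39$ and $\mu=40$. The correction factors in the weakly almost non-degenerate formula are not zeta-functions of the curve germs. They are the zeta-functions $\zeta_{\pi^*g_j,\rho}(t)$ of the three-variable germs $v_1^{d+2}\bigl(\bar f_j(v_2,v_3)+v_1\,h_j(\rho_2+\phi_2,\rho_3+\phi_3)\bigr)$, and these depend on $h_j$ near $\rho$. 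The missing step runs as follows. By \eqref{pseudoLYA}, $h_j(\rho)\neq0$, so the Newton principal part of $\pi^*g_j$ at $\rho$ is that of $v_1^{d+2}(\bar f_j+c\,v_1)$ with $c\neq0$. This is Newton non-degenerate, with Newton diagram determined by that of $\bar f_j$, and only then does Varchenko's formula give $\zeta_{\pi^*g_0,\rho_0}=\zeta_{\pi^*g_1,\rho_1}$. (Conversely, in Step 2 you use \eqref{pseudoLYA} for the wrong purpose. What keeps the singular points off the other divisors is that $\mathrm{Sing}(C_j)$ avoids the coordinate lines, and Step 2 does not need \eqref{pseudoLYA} at all. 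Note also that under \eqref{pseudoLYA} the strict transform is smooth at $\rho$. What must be resolved there is the singular curve $E_j(P)$ on a smooth surface, not a surface singularity of the form $\bar f_j+u^m$.)

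In Step 3, the formula you intend to prove first, $\mu(g_t)=\text{const}+c\sum_p\mu_p(C_t)$, is false on $\mathcal W(\Gamma)$. Points of the path need not satisfy \eqref{pseudoLYA}, and the same example, where both functions lie in $\mathcal W(\Gamma)$, shows that $\mu$ depends on the position of $V(h)$ relative to $\mathrm{Sing}(C)$. It also refutes your claim that a singular point of $C_t$ on $V(h_t)$ violates the non-degeneracy conditions defining $\mathcal W(\Gamma)$. The single number $\mu(g_t)$ mixes $(d+2)\mu^{\text{tot}}(C_t)$ with the uncontrolled degrees of the local factors $\zeta_{\pi^*g_t,\rho}$, so you cannot read $\mu^{\text{tot}}(C_t)$ off from it. The missing idea is to use the whole zeta-function instead. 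It is constant along a $\mu$-constant (piecewise analytic) path by \cite[Lemma~12]{O3}. The local factors involve only $(1-t^m)$ with $m>d+2$, because all exceptional divisors over $\rho$ have multiplicity $>d+2$. By uniqueness of the factorization $\prod(1-t^{d_i})^{\nu_i}$, the exponent of $(1-t^{d+2})$, namely the one from $\zeta(t)$ plus $\mu^{\text{tot}}(C_t)$, is therefore constant, whether or not \eqref{pseudoLYA} holds. Even after that, upper semicontinuity alone does not exclude a singular point splitting with constant total Milnor number. You need the Lazzeri--L\^e non-splitting theorem to rule out bifurcation, and then L\^e's theorem to conclude that $(\mathbb{P}^2,C_t)$ is topologically constant.
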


The proof of this theorem, inspired by \cite{EO1}, is presented in section \ref{sect-proof}.

\begin{remark}
The assumption \eqref{pseudoLYA} is crucial but only used to show that condition (1) of Definition \ref{defwzpss} holds. In general, without this assumption, the zeta-functions of $g_0$ and $g_1$ may differ as shown in section \ref{ex27} below. In contrast, to show that conditions (2) and (3) of the definition hold does not require \eqref{pseudoLYA}. In particular, we do not assume that elements $g$ in $\mathcal{W}(\Gamma)$ satisfy \eqref{pseudoLYA}.
\end{remark}

\begin{remark}
Provided that suitable analogues of assumptions \eqref{FF} and \eqref{pseudoLYA} are satisfied, Theorem \ref{mt} can be extended to polynomials of the form
\begin{equation*}
z_1^r f_j(z_1,z_2,z_3) + k_j(z_2,z_3),
\end{equation*}
where $f_j(z_1,z_2,z_3)$ is as above and $k_j(z_2,z_3)$ is a convenient homogeneous polynomial in $z_2$ and $z_3$ of degree $\delta\geq d+r+1$ with $r \geq 2$.
 Though the computation is more technical in this case (due to the presence of integral points in the interior of the face $\Delta_1$ if $\delta>d+r+1$), the underlying argument remains essentially the same as for $r=2$ and $\delta=d+3$. We leave the details to the reader. 

It is also worth noting that adding terms lying strictly above the Newton boundary of $z_1^r f_j + k_j$ preserves the validity of the result.
\end{remark}

We expect that, as in the case of superisolated singularities studied by Artal Bartolo in \cite{ArtalMem}, if, in addition to the assumptions of Theorem \ref{mt}, the Alexander polynomials of the curves $C_0$ and $C_1$ differ, then the germs $g_0$ and $g_1$ form an ordinary Zariski pair of surface singularities in $\mathbb{C}^3$ (i.e., the surface germs $(V(g_0),\mathbf{0})$ and $(V(g_1),\mathbf{0})$ have distinct embedded topologies in $\mathbb{C}^3$). This leads us to the following conjecture, in which we do not necessarily assume that the singularities of $C_0$ and $C_1$ are Newton non-degenerate.

\begin{conjecture}
If the Alexander polynomials of $C_0$ and $C_1$ are different, then the Jordan form of the monodromy of $g_0$ is different from that of $g_1$, and therefore the surface germs $(V(g_0),\mathbf{0})$ and $(V(g_1),\mathbf{0})$ have distinct embedded topologies in $\mathbb{C}^3$.
\end{conjecture}

A proof would require extending Artal Bartolo's result \cite[Th\'eor\`eme 1.6]{ArtalMem} on superisolated singularities to the class of singularities defined by \eqref{mainpoly}.

\section{Proof of Proposition \ref{pos}}\label{proof-pos}

To show the first item, we argue by contradiction. Suppose that $f$ is not reduced. Then it is of the form $f=h_1^2\cdot h_2\cdots h_r$, and therefore the face function $g_{\Delta}$ of $g$ corresponding to the edge $\Delta$ of $\Gamma$ defined by the vertices $(d+2,0,0)$ and $(2,d,0)$, for example, is written as
\begin{equation}\label{decoEFI}
\begin{aligned}
g_{\Delta}\equiv (z_1^2\, f)_{\Delta}
& = z_1^2\cdot (h_1)_{\Delta}^2\cdot (h_2)_{\Delta}\cdots (h_r)_{\Delta}\\
& = z_1^2\cdot (\alpha z_1+\beta z_2)^2\cdot\ell_2\cdots\ell_{d-2},
\end{aligned}
\end{equation}
where the $\ell_i$'s are linear forms (in $z_1,z_2$) and $\alpha,\beta\not=0$. (For the definition of a \emph{face function}, see \cite[\S 1.19]{K}.) Indeed, if $\alpha$ or $\beta$ were zero, then either the term $z_1^{d+2}$ or $z_1^2z_2^d$ would not appear in $g_{\Delta}$, which is a contradiction.
However, \eqref{decoEFI} implies that $g_{\Delta}$ is not Newton non-degenerate, contradicting the fact that $g\in\mathcal{W}(\Gamma)$.

\begin{figure}[t]
\centering
\includegraphics[width=.4\linewidth]{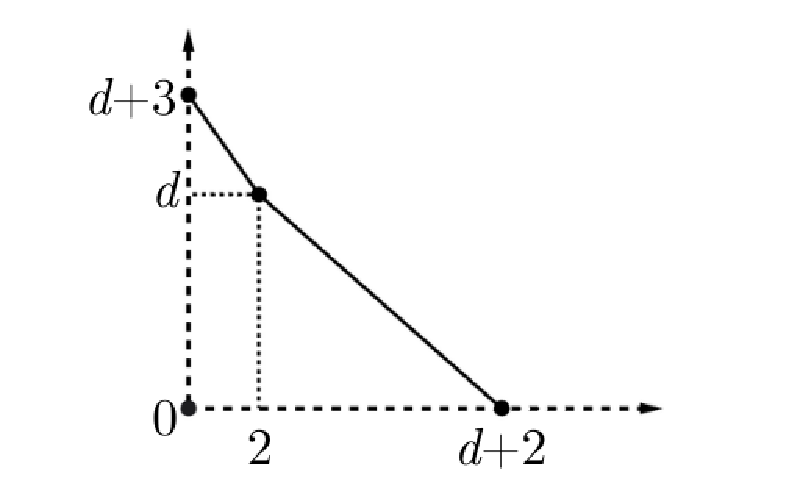}
\caption{Newton boundary $\Gamma(g^H)$}
\label{Figure3}
\end{figure}

Now, to show the second item, suppose that $H$ is given by the equation $z_3=az_1+bz_2$, where $a$ and $b$ are generic coefficients. 
As $f$ is reduced, so is its restriction $f^H$ to $H$, and therefore the Newton principal part of $g^H$---which coincides with the restriction to $H$ of the Newton principal part of $g$, namely
with $z_1^2 f^H+h^H$---is of the form 
\begin{align*}
c\, z_1^2\prod_{i=1}^d (z_1+a_{i}\,z_2)+h(z_2,az_1+bz_2),
\end{align*}
where $c\not=0$ and the $a_{i}$'s are non-zero and mutually distinct. Then, since the restriction $h^H$ of $h$ to $H$ is convenient, the Newton boundary of $g^H$ (see Figure \ref{Figure3}) --- and hence its Newton number --- is independent of $g$, and $g^H$ is convenient and Newton non-degenerate. Thus, by Kouchnirenko's theorem (see \cite[\S 1.10, Th\'eor\`eme~I]{K}), the Milnor number of $g^H$ is independent of the choice of $g$ within $\mathcal{W}(\Gamma)$. In fact, we have $\mu(g^H)=d^2+2d+2$.

\section{Proof of Theorem \ref{mt}}\label{sect-proof}

We must verify that conditions (1)–(3) of Definition \ref{defwzpss} are satisfied. 

\subsection{Condition (1)}\label{subsect-1}

Let $\Sigma^*$ be the regular simplicial cone subdivision of the dual Newton diagram $\Gamma^*(g_j)$ of $g_j$ ($j=0,1$) generated by the vertices $e_1=(1,0,0)$, $e_2=(0,1,0)$, $e_3=(0,0,1)$, $P={}^t(1,1,1)$, $Q={}^t(3,2,2)$ and $R=\frac{1}{2}(e_1+Q)={}^t(2,1,1)$ (see Figure \ref{Figure2}). Here, $P$ and $Q$ represent the weight vectors corresponding to the faces $\Delta_0$ and $\Delta_1$, respectively, while $R$ is introduced to ensure that the subdivision is regular. Then consider the toric modification $\pi\colon X\to\mathbb{C}^3$ associated with this subdivision, and for any vertex $S$ of $\Sigma^*$, let $\hat E(S)$ denotes the divisor of $\pi$ corresponding to $S$ (see \cite[Chap.~II, \S 1]{O1} or \cite[\S 2.5.1]{O3} for the definition). Note that $\hat E(e_i)$ is not an exceptional divisor. Then put
\begin{equation*}
E_j(S):=\hat E(S)\cap \tilde V(g_j),
\end{equation*}
where $\tilde V(g_j)$ is the strict transform of $V(g_j)$ by $\pi$. 

\begin{figure}[t]
\centering
\includegraphics[width=0.42\linewidth]{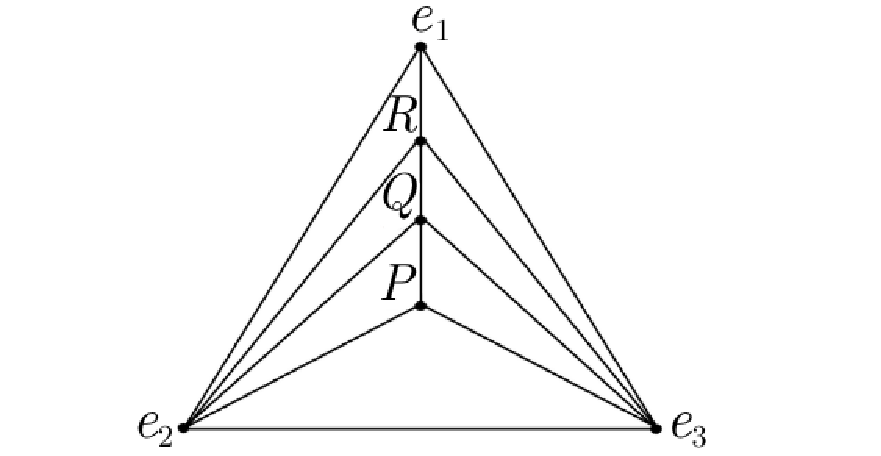}
\caption{Subdivision $\Sigma^*$}
\label{Figure2}
\end{figure}

Let $\mathbf{u}:=(u_1,u_2,u_3)$ denote the coordinates in the toric chart $\mathbb{C}^3_\sigma$ corresponding to the cone $\sigma:=\mbox{Cone}(P,e_2,e_3)$ of $\Sigma^*$ generated by $P$, $e_2$ and $e_3$. In this chart, the pull-back $\pi^*g_j$ of $g_j$ by $\pi$ is written as
\begin{align*}
\pi^*g_j (\mathbf{u}) = u_1^{d+2}(f_j(1,u_2,u_3)+u_1 h_j(u_2,u_3)).
\end{align*}
The first factor, $u_1^{d+2}$, corresponds to the exceptional divisor $\hat E(P)$, while the second one represents the strict transform $\tilde V(g_j)$.
Note that $\tilde V(g_j)$ is non-singular outside of $\hat E(P)$, so that $E_j(Q)$ and $E_j(R)$ are smooth and $E_j(Q)$ transversely intersects both $E_j(R)$ and $E_j(P)$. On the other hand, $E_j(P)$ has a finite number of isolated singularities, which are given by the singular points of the curve $C_j$. 
Now, from our assumptions, the function $g_{j}$ is weakly almost Newton non-degenerate. Thus, by \cite[Theorem 9]{O3} or \cite[Lemma 3.2, Remark 3.6 and Theorem 3.7]{O2}, the monodromy zeta-function $\zeta_{g_j,\mathbf{0}}(t)$ of $g_j$  at $\mathbf{0}$ is given by
\begin{equation}\label{OF}
\zeta_{g_j,\mathbf{0}}(t)=\zeta(t)\times (1-t^{d+2})^{\mu^{\text{tot}}(C_j)} \times \prod_{\substack{\rho\in\text{Sing}(C_j)}} \zeta_{\pi^*g_j,\rho}(t),
\end{equation}
where $\zeta(t)$ is the monodromy zeta-function of a Newton non-degenerate function whose Newton boundary is $\Gamma$, and $\mu^{\text{\tiny tot}}(C_j)$ is the total Milnor number of $C_j$ (i.e., the sum of the local Milnor numbers at the singular points of $C_j$).
The term on the far right of \eqref{OF} is the product, over the singular points $\rho$ of $C_j$, of the monodromy zeta-functions of $\pi^* g_j$ at each of these points.
By Varchenko's theorem (see \cite[Theorem (4.1)]{V}), we have
\begin{equation}\label{OF1}
\zeta(t)=(1-t^{d+2})^{-d^2+2d-1}(1-t^{d+3})^{d+1}(1-t^{2d+6})^{-2d-1}.
\end{equation}
(The first factor arises from the face $\Delta_0$ and its subfaces, whereas the second and third ones originate from the remaining faces.)
Moreover, since $(C_0,C_1)$ is a Zariski pair, the total Milnor numbers $\mu^{\text{tot}}(C_0)$ and $\mu^{\text{tot}}(C_1)$ coincide. Consequently, by \eqref{OF}, in order to establish the equality $\zeta_{g_0,\mathbf{0}}(t)=\zeta_{g_1,\mathbf{0}}(t)$,  it suffices to show that for any singular points ${\rho}_0$ and ${\rho}_1$ of $C_0$ and $C_1$, respectively, chosen so that the germs $(C_0,\rho_0)$ and $(C_1,\rho_1)$ are topologically equivalent, the zeta-functions $\zeta_{\pi^*g_0,\rho_0}(t)$ and $\zeta_{\pi^*g_1,\rho_1}(t)$ coincide. This latter property will follow from the Newton non-degeneracy of the singularities of the curves $C_0$ and $C_1$, which implies that the polynomials $g_0$ and $g_1$ are in fact \emph{almost Newton non-degenerate} in the sense of \cite[\S 3]{O2} --- and not merely \emph{weakly} almost Newton non‑degenerate. More precisely, we have the following lemma.

\begin{lemma}\label{cannd}
For any $j\in\{0,1\}$ and any singular point $\rho\in\mbox{\emph{Sing}}(E_j(P))\simeq \mbox{\emph{Sing}}(C_j)$, there are local coordinates around $\rho$ in which $\pi^*g_j$ is Newton non-degenerate. In other words, the polynomial $g_j$ is almost Newton non-degenerate.
\end{lemma}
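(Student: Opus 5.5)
Work in the chart $\mathbb{C}^3_\sigma$ with $\sigma=\mbox{Cone}(P,e_2,e_3)$, where
\[
\pi^*g_j(\mathbf{u})=u_1^{d+2}\bigl(f_j(1,u_2,u_3)+u_1 h_j(u_2,u_3)\bigr).
\]
Since the singular points of $C_j$ avoid the coordinate lines, every $\rho\in\mbox{Sing}(C_j)$ is visible in this chart as a point $\rho=(0,\rho_2,\rho_3)$ with $\rho_2\rho_3\neq0$, and $E_j(P)=\{u_1=0\}\cap\tilde V(g_j)$ is identified near $\rho$ with the affine curve $f_j(1,u_2,u_3)=0$. By hypothesis there are local analytic coordinates $(v_2,v_3)$ centred at $(\rho_2,\rho_3)$ in the $(u_2,u_3)$-plane in which $\phi(v_2,v_3):=f_j(1,u_2,u_3)$ is Newton non-degenerate; we may take this coordinate change to be analytic and independent of $u_1$. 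Together with $v_1:=u_1$, this gives local coordinates $(v_1,v_2,v_3)$ at $\rho$, in which
\[
\pi^*g_j=v_1^{d+2}\bigl(\phi(v_2,v_3)+v_1\,k(v_2,v_3)\bigr),\qquad k:=h_j(u_2,u_3).
\]

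The decisive point is that $k(\rho)=h_j(\rho_2,\rho_3)\neq0$. Indeed, $(1:\rho_2:\rho_3)$ is a singular point of $C_j$, so by assumption \eqref{pseudoLYA} it is not in $V_{\mathbb{P}^2}(h_j)$. Thus $k$ is a unit near $\rho$. Set $w_1:=v_1k(v_2,v_3)$. Then
\[
\pi^*g_j=\mbox{unit}\cdot w_1^{d+2}\bigl(\phi(v_2,v_3)+w_1\bigr),
\]
and the unit $k^{-(d+2)}$ can be absorbed. The Newton non-degeneracy notion here is the one in \cite{O2}, which is insensitive to multiplying by units or allows this absorption. (If one prefers to avoid this issue, one can argue directly with the face functions below, since the unit is constant to first order.) If, instead, \eqref{pseudoLYA} is not to be used, as the remark after Theorem \ref{mt} suggests for this step, one keeps $v_1k$ as it is and observes that $k(\rho)\neq0$ at least generically. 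I would use \eqref{pseudoLYA} here, since it is available in the statement being proved.

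It remains to check Newton non-degeneracy of $G:=w_1^{d+2}(\phi(v_2,v_3)+w_1)$ in the coordinates $(w_1,v_2,v_3)$. The Newton polyhedron of $\phi+w_1$ is the join of the vertex $(1,0,0)$ with the Newton polygon of $\phi$ in the $(v_2,v_3)$-plane. Its compact faces are of three kinds:
\begin{itemize}
\item faces $\Delta$ of $\Gamma(\phi)$, whose face function is $\phi_\Delta$;
\item the vertex $(1,0,0)$, whose face function is $w_1$;
\item joins of $(1,0,0)$ with a face $\Delta$ of $\Gamma(\phi)$, whose face function is $w_1+\phi_\Delta$.
\end{itemize}
Multiplying by the monomial $w_1^{d+2}$ only translates the polyhedron, so the face functions of $G$ are $w_1^{d+2}$ times these. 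Non-degeneracy of $G$ is therefore the same as that of $\phi+w_1$ on $(\mathbb{C}^*)^3$. For faces of the first kind this follows from the non-degeneracy of $\phi$, since $\phi_\Delta$ does not involve $w_1$. For faces of the third kind, $\partial_{w_1}(w_1+\phi_\Delta)=1\neq0$, so there is no critical point at all. This proves the lemma. Note that $\phi$ need not be convenient, because $\rho$ is an isolated singular point of the reduced curve (Proposition \ref{pos}(1)); \cite{O2} allows such non-convenient local situations.

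The main obstacle is the careful bookkeeping showing that a local analytic coordinate change, together with a unit factor, preserves the structure required by the notion of almost Newton non-degeneracy in \cite[\S 3]{O2}. The computation itself is straightforward; the difficulty is the compatibility with the definition. Concretely, the new coordinate $w_1$ must still define $\hat E(P)$, which holds because $w_1=v_1\cdot\mbox{unit}$.
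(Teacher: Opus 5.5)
Your proof is correct and is essentially the paper's: you use the same chart $\mbox{Cone}(P,e_2,e_3)$, the same admissible coordinates with $v_1=u_1$ in which $\bar f_j$ is non-degenerate, and the same use of \eqref{pseudoLYA} to get $h_j(\rho_2,\rho_3)\neq0$. Your substitution $w_1=v_1k$, together with the standard fact that a unit factor only rescales each compact face function by its constant term, and your face-by-face check just spell out the paper's observation that the monomials $v_1v_2^av_3^b$ with $(a,b)\neq(0,0)$ lie above the Newton boundary of $\bar f_j+c\,v_1$.

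Drop the aside about avoiding \eqref{pseudoLYA}: the remark after Theorem \ref{mt} says it is used exactly for condition (1), i.e.\ in this lemma, and ``$k(\rho)\neq0$ generically'' says nothing about the given $g_j$; in Section \ref{ex27}, $g_1$ has $k(\rho)=0$.
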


\begin{proof}
Let $(0,\rho_{2},\rho_{3})$ be the coordinates of $\rho$ in the toric chart $(\mathbb{C}^3_\sigma,\mathbf{u})$. Consider the coordinates $(u_1,u_2-\rho_{2},u_3-\rho_{3})$ centred at $\rho$. As the germ $(C_j,\rho)$ is defined by a Newton non-degenerate function, there exists an analytic coordinate chart $(\mathcal{U}(\rho),\mathbf{v}=(v_1,v_2,v_3))$ of $X$ at $\rho$ (i.e., $\mathbf{v}(\rho)=\mathbf{0}$) such that:
\begin{enumerate} 
\item
$v_{1}=u_1$ and $\mathbf{v}':=(v_{2},v_{3})$ is an analytic coordinate change of $(u_2-\rho_{2},u_3-\rho_{3})$, that is, there exists $\Phi=(\phi_2,\phi_3)\in\mbox{Aut}(\mathbb{C}^{2})$ such that $u_2-\rho_{2}=\phi_2(\mathbf{v}')$, $u_3-\rho_{3}=\phi_3(\mathbf{v}')$, and $\Phi(\mathbf{0})=(\phi_2(\mathbf{0}),\phi_3(\mathbf{0}))=(0,0)$; 
\item
the defining polynomial 
\begin{equation*}
\bar f_j(\mathbf{v}'):=f_j(1,\rho_{2}+\phi_2(\mathbf{v}'),\rho_{3}+\phi_3(\mathbf{v}'))
\end{equation*} 
of $E_j(P)$ near $\rho$ is Newton non-degenerate.
\end{enumerate}
Hereafter, coordinates $\mathbf{v}=(v_1,v_2,v_3)$ satisfying condition (1) will be called \emph{admissible} coordinates. 
In $(\mathcal{U}(\rho),\mathbf{v})$, the pull-back $\pi^*g_j$ is written as
\begin{equation*}
\pi^*g_j (\mathbf{v}) = v_{1}^{d+2} (\bar f_j(\mathbf{v}')+ v_{1} h_j(\rho_{2}+\phi_2(\mathbf{v}'),\rho_{3}+\phi_3(\mathbf{v}'))).
\end{equation*}
By assumption \eqref{pseudoLYA}, we have $h_j(\rho_{2},\rho_{3})\not=0$, so that the Newton principal part of $\pi^*g_j$ in the coordinates $\mathbf{v}=(v_1,v_{2},v_{3})$ coincides with that of
\begin{equation*}
v_{1}^{d+2} (\bar f_j(\mathbf{v}')+ v_{1}h_j(\rho_{2},\rho_{3})).
\end{equation*}
(Assumption \eqref{pseudoLYA} is used solely to ensure that the linear term $v_{1}h_j(\rho_{2},\rho_{3})$ appears in the expression above with a non-zero coefficient.) Then,
since $\bar f_j$ is Newton non-degenerate in the coordinates $\mathbf{v}'=(v_{2},v_{3})$, we deduce that $\pi^*g_j$ is Newton non-degenerate in the coordinates $\mathbf{v}=(v_1,v_{2},v_{3})$, and therefore $g_j$ is almost Newton non-degenerate.
\end{proof}

Now, let ${\rho}_0$ and ${\rho}_1$ be singular points of $C_0$ and $C_1$, respectively, chosen so that the germs $(C_0,\rho_0)$ and $(C_1,\rho_1)$ are topologically equivalent. By Lemma \ref{cannd}, there are local admissible coordinates $\mathbf{v}=(v_1,v_2,v_3)$ and $\mathbf{w}=(w_1,w_2,w_3)$ near ${\rho}_0$ and ${\rho}_1$, respectively, such that the Newton principal parts of $\pi^*g_0$ and $\pi^*g_1$ are identical (up to coefficients) to those of
\begin{equation*}
v_{1}^{d+2} (\bar f_0(\mathbf{v}')+ v_{1})
\quad\mbox{and}\quad
w_{1}^{d+2} (\bar f_1(\mathbf{w}')+ w_{1}),
\end{equation*}
where $\bar f_0(\mathbf{v}')$ and $\bar f_1(\mathbf{w}')$ are Newton non-degenerate. Since $(C_0,\rho_0)$ and $(C_1,\rho_1)$ are topologically equivalent, we may further assume that the Newton diagrams of $\bar f_0$ and $\bar f_1$, and hence those of $\pi^*g_0$ and $\pi^*g_1$, coincide up to an analytic change of coordinates (see \cite[Remark 3.17, Proposition 4.17 \& Corollary 4.18]{BMP} and \cite[Theorem~5.5.8]{W}). Then, since $\pi^*g_0$ and $\pi^*g_1$ are Newton non-degenerate, it follows from Varchenko's result (see \cite[Theorem~(4.1)]{V}) that the zeta-functions $\zeta_{\pi^*g_0,\rho_0}(t)$ and $\zeta_{\pi^*g_1,\rho_1}(t)$ are equal.

\subsection{Condition (2)}\label{subsect-2}

Now, let us show that the surface germs $V(g_0)$ and $V(g_1)$ at $\mathbf{0}$ --- equivalently, the links $K_{g_0}:=V(g_0)\cap\mathbb{S}^5$ and $K_{g_1}:=V(g_1)\cap\mathbb{S}^5$ --- are homeomorphic. Here, the proof proceeds along lines similar to those presented in \cite[Theorem~25 \& Remark~26]{O3}. Namely we show that, when regarded as graph manifolds, the links $K_{g_0}$ and $K_{g_1}$ share the same plumbing graph. We then invoke a classical theorem of Waldhausen \cite{Waldhausen} and Neumann \cite{Neumann}, which asserts that any plumbing graph determines a unique graph manifold up to homeomorphism. In particular, if $V(g_0)$ and $V(g_1)$ admit resolutions with the same dual resolution graph, then they are homeomorphic.\footnote{As is standard, plumbing graphs and dual resolution graphs are, by default, \emph{weighted} graphs.}

For $j\in\{0,1\}$, let $\rho_{j,1},\ldots\rho_{j,s}$ denote the singularities of $E_j(P)$ (i.e., the singular point of the curve $C_j$). As above, for each $\rho_{j,i}$, there are local admissible coordinates $\mathbf{v}=(v_1,v_2,v_3)$ near $\rho_{j,i}$ such that the Newton principal part of $\pi^*g_j$ coincides (up to coefficients) with that of
\begin{equation*}
v_{1}^{d+2} (\bar f_j(v_2,v_3)+ v_{1}),
\end{equation*}
where $\bar f_0(v_2,v_3)$ and $\bar f_1(v_2,v_3)$ are given by the same fixed, normal form of the Newton non-degenerate singularity $(C_0,\rho_{0,i})\simeq (C_1,\rho_{1,i})$.
To resolve the singularities $\rho_{j,i}$, we proceed further toric modifications $\omega_{j,i}\colon Y_{j,i}\to X$ at each $\rho_{j,i}$ with respect to the same regular simplicial cone subdivision for $j=0$ and $j=1$. Denote by $\hat E_{j,i,1},\ldots, \hat E_{j,i,r_i}\subseteq Y_{j,i}$ the corresponding exceptional divisors. Note that $r_i$ does not depend on $j$. Now, let
\begin{equation*}
\Pi_j\colon Y_j\xrightarrow{\omega_j}X\xrightarrow{\pi}\mathbb{C}^3
\end{equation*}
denote the resolution of $g_j$ obtained by composing the $\omega_{j,i}$'s with $\pi$. Here, $Y_j$ is the canonical gluing of the union of the $Y_{j,i}$'s, and $\omega_j\colon Y_j\to X$ denotes the union of the toric modifications $\omega_{j,i}$. Let $\tilde V_{Y_j}(g_j)$ be the strict transform of $V(g_j)$ by $\Pi_j$. Then, put 
\begin{equation*}
E_{j,i,k}:=\hat E_{j,i,k}\cap \tilde V_{Y_j}(g_j), 
\end{equation*}
and write
\begin{align*}
& E_{Y_j}(S):=\omega_j^{-1}(E_j(S)) \mbox{ for } S=Q\mbox{ or }R,\\
& E_{Y_j}(P):=\overline{\omega_j^{-1}(E_j(P)\setminus\{\rho_{j,1},\ldots,\rho_{j,s}\})},
\end{align*}
for the strict transforms of $E_j(S)$ and $E_j(P)$ by $\omega_j$, respectively. (Here, the bar denotes the topological closure in $Y_j$.) 
Note that 
\begin{align*}
& E_{Y_j}(S)\simeq E_j(S)\mbox{ for } S=Q\mbox{ or }R,\\
& E_{Y_j}(P)\bigg\backslash \bigcup_{i,k}E_{j,i,k}\simeq E_j(P)\setminus\{\rho_{j,1},\ldots,\rho_{j,s}\},
\end{align*}
where the union is taken over all $1\leq i\leq s$ and $1\leq k\leq r_i$.

As $(C_0,C_1)$ is a Zariski pair, $C_0$ and $C_1$ have the same singularities, and therefore both surfaces $V(g_0)$ and $V(g_1)$ get exactly the same configuration of exceptional divisors 
\begin{equation}\label{lod}
E_{Y_j}(P), \ E_{Y_j}(Q), \ E_{Y_j}(R), \ E_{j,i,1},\ldots, E_{j,i,r_i}
\end{equation}
 in $Y_j$. 
Each of the divisors $E_{Y_0}(R)$ and $E_{Y_1}(R)$ is a disjoint union of $d$ rational spheres $\mathbb{S}^2$, which implies that they are homeomorphic. Concerning the smooth divisors $E_{Y_0}(Q)$ and $E_{Y_1}(Q)$, since $g_j$ is Newton non-degenerate on $\Delta_1$ (i.e., on the face associated with the weight vector $Q$) and on its subfaces, it follows from the Khovanskii--Kouchnirenko--Oka theorem \cite{Kh,K,O4} (see also \cite[Chap.~III, Lemma (5.1) \& Chap.~IV, Theorem (3.1)]{O1}) that the Euler characteristic of $E_{Y_j}^*(Q)$, and hence that of $E_{Y_j}(Q)$,
 is completely determined by $\Delta_1$, so that $\chi(E_{Y_0}(Q))=\chi(E_{Y_1}(Q))$. Here, $E_{Y_j}^*(Q)$ is defined by
\begin{equation*}
E_{Y_j}^*(Q):=E_{Y_j}(Q)\bigg\backslash\bigcup_{S=P,R,e_2,e_3} E_{Y_j}(Q)\cap E_{Y_j}(S).
\end{equation*}
In particular, this implies that $E_{Y_0}(Q)$ and $E_{Y_1}(Q)$ have the same genus, and consequently, that they are homeomorphic.
Finally, concerning the \emph{irreducible} divisors $E_{Y_0}(P)$ and $E_{Y_1}(P)$, the Euler characteristic $\chi(E_{Y_j}(P))$ is given by
\begin{equation*}
\chi(E_{Y_j}(P))=3d-d^2+\mu^{\text{tot}}(C_j)+\sum_{i=1}^s (\nu_{j,i}-1),
\end{equation*}
where $\nu_{j,i}$ is the number of local branches of $C_j$ at $\rho_{j,i}$, and since $C_0$ and $C_1$ have the same singularities, it follows that $\chi(E_{Y_0}(P))=\chi(E_{Y_1}(P))$. 
Now, since they are irreducible, they also have the same genus, and therefore they are homeomorphic.

Again, because the singularities of $C_0$ and $C_1$ are identical, we similarly verify that $E_{0,i,k}$ and $E_{1,i,k}$ are homeomorphic for any $1\leq i\leq s$ and $1\leq k\leq r_i$.

Now, to show that the resolutions
\begin{equation*}
\Pi_0\vert_{\tilde V_{Y_0}(g_0)} \colon \tilde V_{Y_0}(g_0)\to V(g_0)
\quad\mbox{and}\quad
\Pi_1\vert_{\tilde V_{Y_1}(g_1)} \colon \tilde V_{Y_1}(g_1)\to V(g_1) 
\end{equation*}
have the same dual resolution graph, it remains to verify that for any $\ell,\ell'$ the intersection numbers 
$E_{0,\ell}\cdot E_{0,\ell'}$ and $E_{1,\ell}\cdot E_{1,\ell'}$
are identical, where $E_{j,\ell}$ and $E_{j,\ell'}$ denote any of the irreducible components of the divisors that appear in the list \eqref{lod} (in particular, each divisor $E_{j,i,k}$, $1\leq k\leq r_i$, is a union of such irreducible pieces $E_{j,\ell}$). Such an equality is clear for $\ell\not=\ell'$. For the self-intersection numbers $E_{j,\ell}^2$, we use the property that for any rational function 
\begin{equation*}
\varphi_j \colon \tilde V_{Y_j}(g_j) \to \mathbb{P}^1
\end{equation*} 
on the smooth variety $\tilde V_{Y_j}(g_j)$, the intersection number $(\varphi_j) \cdot E_{j,\ell}$ must be zero, where $(\varphi_j)$ denotes the divisor defined by $\varphi_j$ (see, e.g., \cite[Theorem~2.6]{Laufer}). In particular, by taking $\varphi_j:=\Pi_j^*z_2\vert_{\tilde V_{Y_j}(g_j)}$, we obtain
\begin{equation}\label{ae25}
(\varphi_j)=\sum_{\ell} m_{j,\ell}\, E_{j,\ell},
\end{equation}
so that, for instance, the self-intersection number $E_{Y_j}(P)^2$ of the irreducible divisor $E_{Y_j}(P)$ satisfies
\begin{equation}\label{ae25bis}
E_{Y_j}(P)^2 = - \frac{1}{m_j(P)} \Bigg( \sum_{\{\ell\, \mid\, E_{j,\ell}\not= E_{Y_j}(P)\}} m_{j,\ell}\, E_{j,\ell}\cdot E_{Y_j}(P)\Bigg).
\end{equation}
The sum in \eqref{ae25} is taken over all the irreducible components $E_{j,\ell}$ of the divisors that appear in (4.3), while that in \eqref{ae25bis} is taken over all these  components except $E_{Y_j}(P)$. The integer $m_{j,\ell}$ (respectively, $m_j(P)$) denotes the multiplicity of $\varphi_j$ along $E_{j,\ell}$ (respectively, $E_{Y_j}(P)$).
Thus, once again, since $C_0$ and $C_1$ have the same singularities, it follows that $E_{Y_0}(P)^2=E_{Y_1}(P)^2$.

The relations $E_{0,\ell}^2=E_{1,\ell}^2$ for the other irreducible pieces $E_{j,\ell}$'s are obtained by similar arguments.

Altogether, the restrictions of $\Pi_0$ and $\Pi_1$ to $\tilde V_{Y_0}(g_0)$ and $\tilde V_{Y_1}(g_1)$, respectively, have the same dual resolution graph, and we conclude by invoking Waldhausen--Neumann's theorem \cite{Waldhausen,Neumann}.

\begin{remark}
Note that, in \S \ref{subsect-1}, to establish the equality $\zeta_{g_0,\mathbf{0}}(t)=\zeta_{g_1,\mathbf{0}}(t)$, we relied solely on the \emph{almost Newton non-degeneracy theory} developed in \cite{O2,O3}. In particular, no information about the dual resolution graphs of $\Pi_0\colon\tilde V_{Y_0}(g_0)\to V(g_0)$ and $\Pi_1\colon\tilde V_{Y_1}(g_1)\to V(g_1)$ was required. By contrast, in \S \ref{subsect-2}, proving that the surface germs $V(g_0)$ and $V(g_1)$ have the same abstract topology necessitated the use of dual resolution graph theory.
\end{remark}

\subsection{Condition (3)}

It remains to show that $g_0$ and $g_1$ lie in different path-connected components of the $\mu$-constant strata of $\mathcal{W}(\Gamma)$. We argue by contradiction. Assume that there exists a $\mu$-constant continuous path 
\begin{equation*}
\gamma\colon s\in I:=[0,1]\to g_s:=\gamma(s)\in\mathcal{W}(\Gamma)
\end{equation*}
 such that $\gamma(0)=g_0$ and $\gamma(1)=g_1$. Then, without loss of generality, we may assume that $\gamma$ is a \emph{piecewise complex-analytic} path.
Indeed, by \cite[\S 3]{EO2}, the $\mu$-constant stratum of an isolated surface singularity is a \emph{constructible} set, and taking its intersection with $\mathcal{W}(\Gamma)$ (which is also constructible) preserves this structure. The stated property then follows from an argument similar to that used in the proof of \cite[Theorem 5.4 \& Proposition~5.2]{EO2}. 

As in \eqref{NPPofg}, the Newton principal part of $g_s$ is written as
\begin{equation*}
z_1^2 f_s(z_1,z_2,z_3) + h_s(z_2,z_3),
\end{equation*}
where $f_s(z_1,z_2,z_3)$ and $h_s(z_2,z_3)$ are convenient, homogeneous polynomials of degree $d$ and $d+3$, respectively, and the plane projective curve~$C_{s}\subseteq \mathbb{P}^2$ defined by $f_s$ has only isolated singularities (see Proposition \ref{pos}). 

\begin{lemma}\label{zmcc2}
The total Milnor number $\mu^{\text{\emph{tot}}}(C_s)$ is independent of $s$.
\end{lemma}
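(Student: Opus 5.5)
The plan is to read off $\mu(g_s)$ from the zeta-function formula \eqref{OF} and show that it has the form
\[
\mu(g_s)=\text{(a constant depending only on }\Gamma)+(d+2)\,\mu^{\text{tot}}(C_s).
\]
Since $\mu(g_s)$ is constant along $\gamma$, this forces $\mu^{\text{tot}}(C_s)$ to be constant. Every $g_s\in\mathcal{W}(\Gamma)$ is weakly almost Newton non-degenerate, because $f_s$ is reduced by Proposition \ref{pos}. So formula \eqref{OF} of \cite[Theorem 9]{O3}, \cite[Theorem 3.7]{O2} applies to each $g_s$, with the same toric modification $\pi$ and the same factor $\zeta(t)$ given by \eqref{OF1}. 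Note that assumption \eqref{pseudoLYA} was not needed for \eqref{OF}; it was only used afterwards, in Lemma \ref{cannd}. Hence
\[
\zeta_{g_s,\mathbf{0}}(t)=\zeta(t)\,(1-t^{d+2})^{\mu^{\text{tot}}(C_s)}\prod_{\rho\in\text{Sing}(C_s)}\zeta_{\pi^*g_s,\rho}(t).
\]

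Next I take degrees. Since $n=3$, the Euler characteristic of the Milnor fibre of $g_s$ at $\mathbf{0}$ is $1+\mu(g_s)$, and it equals $\deg\zeta_{g_s,\mathbf{0}}$ with the paper's sign convention. The factor $\zeta(t)$ has degree independent of $s$, and the middle factor contributes $(d+2)\mu^{\text{tot}}(C_s)$. What remains is to show that
\[
\deg \zeta_{\pi^*g_s,\rho}(t)=\chi(F_{\pi^*g_s,\rho})=0
\]
for every singular point $\rho$ of $C_s$. This has to hold whether or not $h_s(\rho)=0$, because elements of the path need not satisfy \eqref{pseudoLYA}.

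This vanishing is the step I expect to be the main obstacle. Near $\rho$, in admissible coordinates, we have $\pi^*g_s=v_1^{d+2}\,G(\mathbf v)$ with $G=\bar f_s(\mathbf v')+v_1 k(\mathbf v)$. My first attempt will be A'Campo's formula applied to an embedded resolution of the total transform near $\rho$. The Euler characteristic of the Milnor fibre is the sum, over exceptional components $D$ on which $\pi^*g_s$ has multiplicity $m_D$, of $m_D\cdot\chi(D^\circ)$, where $D^\circ$ is the part of $D$ meeting no other component of the total transform.

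The local picture is the divisor $\{v_1=0\}$ carrying the curve germ $\{\bar f_s=0\}$, together with the strict transform $\{G=0\}$, which passes through that curve germ. I expect every stratum $D^\circ$ over $\rho$ to fibre with fibre $\mathbb{C}^*$. Concretely, a further toric (or blow-up) resolution in the $\mathbf{v}'$-directions, performed uniformly in $v_1$, should produce exceptional components whose open parts are of the form $(\text{curve})\times\mathbb{C}^*$ or are fibred by punctured discs. Each such stratum then has Euler characteristic $0$.

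As a fallback I would use the fibration of $F_{\pi^*g_s,\rho}$ over the $v_1$-coordinate, or equivalently a Lê-type carrousel argument. Its fibres are $\{\bar f_s+v_1k=c\,v_1^{-(d+2)}\}$, and over a punctured disc the resulting fibration has base of Euler characteristic zero. Either route gives $\chi(F_{\pi^*g_s,\rho})=0$, which yields
\[
1+\mu(g_s)=\deg\zeta(t)+(d+2)\mu^{\text{tot}}(C_s),
\]
and the lemma follows from the constancy of $\mu(g_s)$ along $\gamma$.
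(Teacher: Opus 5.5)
Your reduction rests on the claim that $\deg\zeta_{\pi^*g_s,\rho}(t)=0$, i.e.\ $\chi(F_{\pi^*g_s,\rho})=0$, for every $\rho\in\text{Sing}(C_s)$. This claim is false.

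The paper's own example in Section~\ref{ex27} refutes it. There $d=2$, $\zeta(t)=(1-t^4)^{-1}(1-t^5)^3(1-t^{10})^{-5}$ and $\mu^{\text{tot}}(C)=1$. Formula \eqref{OF}, together with the stated $\zeta_{g_0,\mathbf{0}}$ and $\zeta_{g_1,\mathbf{0}}$, then forces $\zeta_{\pi^*g_0,\rho}(t)=(1-t^5)^{-1}$ and $\zeta_{\pi^*g_1,\rho}(t)=(1-t^6)^{-1}$. These have degrees $-5$ and $-6$, not $0$.

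Your $\mathbb{C}^*$-fibration heuristic is right for $v_1^{d+2}\bar f(\mathbf v')$, but the term $v_1k$ in $G$ destroys it. Take $v_1^4(v_1+v_2^2+v_3^2)$ and resolve torically with weights ${}^t(2,1,1)$ and ${}^t(1,1,1)$:
\begin{itemize}
\item the divisor of weight ${}^t(1,1,1)$ has multiplicity $5$;
\item its face function is the monomial $v_1^5$, so the strict transform misses it;
\item the coordinate divisors for $v_2,v_3$ have multiplicity $0$, so they do not cut it.
\end{itemize}
Its A'Campo stratum is therefore $\mathbb{P}^2\setminus\mathbb{P}^1\cong\mathbb{C}^2$, with Euler characteristic $1$. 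The weight ${}^t(2,1,1)$ stratum has $\chi=0$, giving total $5$.

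More generally, when $h_s(\rho)\neq 0$ one finds $\chi(F_{\pi^*g_s,\rho})=(d+3)\,\mu(C_s,\rho)$. When $h_s(\rho)=0$ the value is different: $6$ instead of $5$ in the example.

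Your carrousel fallback fails for the same reason: $v_1$ restricted to the Milnor fibre is not a locally trivial fibration over a punctured disc. Also, with the paper's convention, $\deg\zeta_{g,\mathbf{0}}=-(1+\mu(g))$, not $1+\mu(g)$.

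So the degree alone cannot isolate $\mu^{\text{tot}}(C_s)$. The number $\mu(g_s)$ mixes $\mu^{\text{tot}}(C_s)$ with local terms that depend on how $V(h_s)$ meets $\text{Sing}(C_s)$. Elements of the path need not satisfy \eqref{pseudoLYA}, so a drop in $\mu^{\text{tot}}$ could a priori be offset by these local terms.

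The missing idea is to use the full zeta-function rather than its degree:
\begin{enumerate}
\item Along a $\mu$-constant path, $\zeta_{g_s,\mathbf{0}}(t)$ itself is constant \cite[Lemma~12]{O3}.
\item Near $\rho$ one has $\pi^*g_s=v_1^{d+2}G$ with $G(\rho)=0$. Hence every divisor over $\rho$ in a resolution has multiplicity $>d+2$, and the local factors $\zeta_{\pi^*g_s,\rho}(t)$ contain no power of $(1-t^{d+2})$.
\item By uniqueness of the expression $\prod(1-t^{d_i})^{\nu_i}$ \cite{AC}, the exponent of $(1-t^{d+2})$ in $\zeta_{g_s,\mathbf{0}}(t)$ is exactly $-d^2+2d-1+\mu^{\text{tot}}(C_s)$.
\end{enumerate}
That exponent is therefore independent of $s$, which gives the lemma. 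This is the paper's argument.
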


\begin{proof}
Since the Milnor number of $g_s$ is independent of $s$, so is the zeta-function $\zeta_{g_s,\mathbf{0}}(t)$ (see \cite[Lemma~12]{O3}). Moreover, like in \eqref{OF}, since $g_s$ is weakly almost Newton non-degenerate, its zeta-function at $\mathbf{0}$ is given by
\begin{equation*}
\zeta_{g_s,\mathbf{0}}(t)=\zeta(t)\times (1-t^{d+2})^{\mu^{\text{tot}}(C_s)} \times \prod_{\substack{\rho\in\text{Sing}(C_s)}} \zeta_{\pi^*g_s,\rho}(t),
\end{equation*}
where $\zeta(t)$ is as in \eqref{OF1} (see \cite{O2,O3}),
and for each $\rho\in\text{Sing}(C_s)$, the exceptional divisors of a suitable resolution of $\pi^*\! g_s$ at $\rho$ all have multiplicity greater than $d+2$. 
Since $\zeta_{g_s,\mathbf{0}}(t)$ is uniquely written as a product $\prod (1-t^{d_i})^{\nu_i}$ with the $d_i$'s all distinct and the $\nu_i$'s non-zero (see \cite[Th\'eor\`eme 3]{AC}), it follows that the factor $\zeta(t)\times (1-t^{d+2})^{\mu^{\text{tot}}(C_s)}$, and hence the total Milnor number $\mu^{\text{tot}}(C_s)$, is independent of $s$.
\end{proof}

\begin{lemma}\label{nbs}
The family of curves $C_s$, $s\in I$, has no bifurcation of singularities.
\end{lemma}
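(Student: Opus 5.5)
The plan is to derive the absence of bifurcation from the constancy of the total Milnor number $\mu^{\text{tot}}(C_s)$ given by Lemma~\ref{zmcc2}, combined with the structure of the zeta-function formula used in its proof. By Proposition~\ref{pos}, each $C_s$ is a reduced curve of degree $d$, so $\{C_s\}_{s\in I}$ is a (piecewise analytic) family of reduced plane curves. Fix $s_0\in I$ and a singular point $\rho$ of $C_{s_0}$. By upper semicontinuity of the Milnor number, for $s$ near $s_0$ the sum of the Milnor numbers of the singular points of $C_s$ lying in a small ball $B_\rho$ around $\rho$ is at most $\mu(C_{s_0},\rho)$. Away from the finitely many singular points of $C_{s_0}$, no singularities can appear for $s$ close to $s_0$. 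Summing over $\rho$ and using $\mu^{\text{tot}}(C_s)=\mu^{\text{tot}}(C_{s_0})$, I will conclude that the local inequalities are equalities: for each $\rho$,
\[
\sum_{\rho'\in \mathrm{Sing}(C_s)\cap B_\rho}\mu(C_s,\rho')=\mu(C_{s_0},\rho).
\]

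The key step is then to show that each ball $B_\rho$ contains exactly one singular point of $C_s$; that is, a singularity cannot split into several singularities of smaller Milnor number whose sum is preserved. For plane curve germs, Milnor-number constancy alone does not rule this out. Degenerations such as $\mu$-constant splittings are impossible by Lê--Ramanujam type arguments only when there is a single singular point. I would therefore use the extra information coming from the zeta-function. By \cite[Lemma~12]{O3}, $\zeta_{g_s,\mathbf{0}}(t)$ is constant in $s$, so in the formula from the proof of Lemma~\ref{zmcc2} the product $\prod_{\rho}\zeta_{\pi^*g_s,\rho}(t)$ is also constant. Locally near $\rho$, $\pi^*g_s$ has the form $u_1^{d+2}(\bar f_s(u_2,u_3)+u_1 h_s)$. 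Its zeta-function at a point should be computable from the data of the curve germ $(C_s,\rho')$ (via A'Campo's formula on an embedded resolution of $\bar f_s$, multiplied by $u_1^{d+2}$). In that formula, factors $(1-t^{N})$ appear whose exponents $N$ exceed $d+2$ and encode the multiplicities of the embedded resolution of each germ. I would aim to read off, from the smallest exceptional exponents, an invariant that is additive over singular points but strictly superadditive under splitting. Candidates are the number of points themselves, or $\mu+\nu-1$ (that is, $2\delta$) together with $\mu$.

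A simpler alternative, which I would try first, is to exploit the fact that $\gamma$ is piecewise complex-analytic. On each analytic piece, the set of $s$ where the singularity configuration of $C_s$ is generic is the complement of a finite set. The family of curves $\{C_s\}$ is then an equisingular-in-the-total-$\mu$ sense family of reduced projective curves. By a theorem on $\mu^{\text{tot}}$-constant families of plane curves, such as that of Lê or Greuel, the constancy of $\mu^{\text{tot}}$ implies that the singular points move without collision or splitting, so the family is equisingular. Concretely, I would apply the local statement that for a family of reduced plane curve germs $\{f_s=0\}$, if the sum of nearby Milnor numbers is constant, then there is only one singular point nearby, lying on a section, along which $\mu$ is constant. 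This follows from Lazzeri's / Gabrielov--Lê's theorem that a hypersurface singularity cannot split with constant total Milnor number, applied to $\bar f_s$ in two variables.

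The main obstacle is precisely this non-splitting statement. I expect to cite Lazzeri's theorem (in the form of \cite{EO2}-style arguments, or Lê's \emph{non-splitting} result), which states that if $\mu$ is conserved in a ball, then the critical points do not split. Citing it correctly and checking that its hypotheses hold in the present setting is the delicate point. Granting it, each singular point of $C_{s_0}$ deforms to a unique singular point of $C_s$ with the same Milnor number, hence with the same topological type, since $\mu$-constant families of plane curves are equisingular. By connectedness of $I$, the family $C_s$ has no bifurcation of singularities.
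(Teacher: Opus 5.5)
Your fallback argument in the third and fourth paragraphs (upper semicontinuity of the local Milnor sums, constancy of $\mu^{\text{tot}}(C_s)$ from Lemma~\ref{zmcc2}, and the Lazzeri--L\^e non-splitting theorem) is essentially the paper's proof, which argues by contradiction that a bifurcation at $s_0$ would force $\mu^{\text{tot}}(C_s)<\mu^{\text{tot}}(C_{s_0})$. The zeta-function detour is unnecessary, and your claim in the second paragraph that constancy of the local Milnor sum cannot by itself exclude splitting is mistaken: all singular points of $C_s$ in $B_\rho$ lie in the single fibre $\{\bar f_s=0\}$, which is exactly the hypothesis under which Lazzeri--L\^e gives the strict inequality, so the ``delicate point'' you flag holds automatically.
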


\begin{proof}
We argue by contradiction. Suppose there exists $s_0$ such that the family has a bifurcation of singularities in a small ball $B$ centred at a singular point $\rho_0$ of $C_{s_0}$.
Then, by a theorem of Lazzeri \cite{Lazzeri} and L\^e \cite[Th\'eor\`eme~B]{Le} (see also \cite{B}), for $s\not=s_0$ near $s_0$, we have
\begin{equation*}\label{plemma-esmn}
\sum_{\rho\in B\cap \text{Sing}(C_s)}\mu(C_s,\rho) 
< \mu(C_{s_0},\rho_0),
\end{equation*}
where $\mu(C_s,\rho)$ and $\mu(C_{s_0},\rho_0)$  are the Milnor numbers of  $C_s$ and $C_{s_0}$ at $\rho$ and $\rho_0$, respectively.
It folows that $\mu^{\text{\tiny tot}}(C_{s})<\mu^{\text{\tiny tot}}(C_{s_0})$, which contradicts Lemma \ref{zmcc2}. 
\end{proof}

To complete the proof of the theorem, it suffices to observe that, combined with another well known theorem of L\^e \cite{Le2} (see also the discussion in \cite[pp.~17--18 and p.~121]{Dimca}), Lemma \ref{nbs} implies that the topological type of the pair $(\mathbb{P}^2,C_s)$ is independent of $s$, and therefore $(C_0,C_1)$ is not a Zariski pair --- a contradiction.

\section{Effect of dropping the assumption \eqref{pseudoLYA}}\label{ex27}

In this section, we show that if the assumption \eqref{pseudoLYA} fails for at least one of the functions $g_0$ or $g_1$, then the zeta-functions $\zeta_{g_0,\mathbf{0}}(t)$ and $\zeta_{g_1,\mathbf{0}}(t)$ may differ. Consider, for example, the polynomials $g_0$ and $g_1$ defined by
\begin{align*}
& g_0:=z_1^2(z_1+z_2-2z_3)(z_1+3z_2-4z_3)+z_2^5+z_3^5,\\
& g_1:=z_1^2(z_1+z_2-2z_3)(z_1+3z_2-4z_3)+z_2^5-z_3^5,
\end{align*}
which differ only in the coefficient $\pm 1$ of the last monomial $\pm z_3^5$.
The Newton boundary $\Gamma:=\Gamma(g_0)=\Gamma(g_1)$ has two $2$-dimensional faces: a triangular face given by the vertices $(4,0,0)$, $(2,2,0)$, $(2,0,2)$, and a rectangular one defined by $(2,2,0)$, $(2,0,2)$, $(0,5,0)$ and $(0,0,5)$. Both $g_0$ and $g_1$ are in $\mathcal{W}(\Gamma)$. The curve $C:=C_0\equiv C_1$, defined by $f_0 \equiv f_1 := (z_1 + z_2 - 2 z_3)(z_1 + 3 z_2 - 4 z_3)$, possesses a single node at $[1:1:1]$. This point does not lie on $\{h_0 := z_2^5 + z_3^5=0\}$, but belongs to $\{h_1 := z_2^5 - z_3^5=0\}$,
so that $g_0$ satisfies assumption \eqref{pseudoLYA}, whereas $g_1$ does not.
 As above, using \cite{O2,O3}, we easily check that the zeta-functions $\zeta_{g_0,\mathbf{0}}(t)$ and $\zeta_{g_1,\mathbf{0}}(t)$ of the weakly almost Newton non-degenerate functions $g_0$ and $g_1$ are given by
\begin{align*}
& \zeta_{g_0,\mathbf{0}}(t)=(1-t^5)^{2}(1-t^{10})^{-5},\\
& \zeta_{g_1,\mathbf{0}}(t)=(1-t^5)^{3}(1-t^{10})^{-5}(1-t^6)^{-1},
\end{align*}
so that the Milnor numbers $\mu(g_0)=-1-\deg \zeta_{g_0,\mathbf{0}}(t)$ and $\mu(g_1)=-1-\deg \zeta_{g_1,\mathbf{0}}(t)$ are $39$ and $40$, respectively. In other words, condition (1) of Definition \ref{defwzpss} fails. 

The reason for this failure is that, under the toric modification described in \S\ref{subsect-1}, the pullback $\pi^{*}g_{0}$ at the singular point of $C$ takes the form $v_1^2(v_2^2+v_3^2)+cv_1+\cdots$, where the linear term $c v_{1}$ arises from \eqref{pseudoLYA}, while the pullback $\pi^{*}g_{1}$ is given by the expression $v_1^2(v_2^2+v_3^2)+v_1(v_2+v_3)+\cdots$, without any linear term in $v_1$. (Here, the dots ``$\cdots$'' indicate higher-order terms.)

\section*{Acknowledgments}

This research was supported by the Narodowe Centrum Nauki under the grant number
2023/49/B/ST1/00848.

\bibliographystyle{amsplain}

\end{document}